\newcommand{\lce}{x_1+\ldots+x_k \equiv b\,(\text{mod } n)}
\newcommand{\Z}{\mathbb{Z}}
\newcommand{\N}{\mathbb{N}}
\newtheorem{thm}{Theorem}[section]
\newtheorem{lemma}[thm]{Lemma}
\newcommand{\zerodisplayskips}{%
  \setlength{\abovedisplayskip}{0pt}%
  \setlength{\belowdisplayskip}{0pt}%
  \setlength{\abovedisplayshortskip}{0pt}%
  \setlength{\belowdisplayshortskip}{0pt}}
\appto{\normalsize}{\zerodisplayskips}
\appto{\small}{\zerodisplayskips}
\appto{\footnotesize}{\zerodisplayskips}
\title[On Solving a restricted linear congruence]{On solving a restricted linear congruence using generalized Ramanujan sums}
\author[K V Namboothiri]{K Vishnu Namboothiri}
\address{Department of Mathematics, Government Polytechnic College, Vennikkulam, Thiruvalla, Kerala - 689 544, INDIA\\Department of Collegiate Education, Government of Kerala, INDIA}
\email{kvnamboothiri@gmail.com}
\begin{document}
\baselineskip=17pt

\begin{abstract}
Consider the linear congruence equation $\lce$ for $b,n\in\Z$.  By $(a,b)_s$, we mean the largest $l^s\in\N$ which divides $a$ and $b$ simultaneously. For each $d_j|n$, define $\mathcal{C}_{j,s} = \{1\leq x\leq n^s | (x,n^s)_s = d^s_j\}$. Bibak \emph{et al.} \cite{bibak2016restricted} gave a formula using Ramanujan sums for the number of solutions of the above congruence equation with some gcd restrictions on $x_i$. We generalize their result with generalized gcd restrictions on $x_i$ by  proving that for the above linear congruence,  the number of solutions is 
$$\frac{1}{n^s}\sum\limits_{d|n}c_{d,s}(b)\prod\limits_{j=1}^{\tau(n)}\left(c_{\frac{n}{d_j},s}(\frac{n^s}{d^s})\right)^{g_j}$$ 
where $g_j = |\{x_1,\ldots, x_k\}\cap \mathcal{C}_{j,s}|$ for $j=1,\ldots \tau(n)$ and $c_{d,s}$ denote the generalized ramanujan sum defined by E. Cohen in \cite{cohen1949extension}. 
\end{abstract}

\subjclass[2010]{11D79, 11P83, 11L03, 11A25, 42A16}

\keywords{Restricted linear congruence, generalized gcd, generalized Ramanujan sum, finite Fourier transforms}

\maketitle

\section{Introduction}
The history of attempts to find general solutions of linear congruences is very old. For the general linear congruence equation
\begin{equation}
 a_1 x_1+\ldots a_k x_k\equiv b \,(\text{mod } n) \label{gen_lin_cong}
\end{equation}
D. N. Lehmer \cite{lehmer1913certain} proved the following:
\begin{thm}
 Let $a_1,\ldots,a_k,b,n\in\Z, n\geq 1$. The linear congruence equation (\ref{gen_lin_cong}) has a solution $\langle x_1,\ldots, x_n\rangle \in \Z_n^k$ if and only if $l|b$ where $l$ is the gcd of $a_1,\ldots,a_k,n$. Furthermore, if this condition is satisfied, then ther are $ln^{k-1}$ solutions.
\end{thm}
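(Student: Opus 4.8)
The plan is to compute the number of solutions directly by a finite Fourier (orthogonality) argument, which yields both the solvability criterion and the count $ln^{k-1}$ in a single calculation. Write $e_n(m)=\exp(2\pi i m/n)$ and recall the orthogonality relation $\frac{1}{n}\sum_{t=0}^{n-1}e_n(tm)=1$ when $n\mid m$ and $0$ otherwise. Applying this with $m=a_1x_1+\ldots+a_kx_k-b$ converts the indicator of ``$(x_1,\ldots,x_k)$ is a solution'' into an exponential sum, so that the number $N$ of solutions becomes
$$N=\sum_{x_1,\ldots,x_k\in\Z_n}\frac{1}{n}\sum_{t=0}^{n-1}e_n\!\left(t(a_1x_1+\ldots+a_kx_k-b)\right).$$

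First I would interchange the order of summation and pull out the factor $e_n(-tb)$, so that the sum over the $x_i$ factors as a product:
$$N=\frac{1}{n}\sum_{t=0}^{n-1}e_n(-tb)\prod_{i=1}^{k}\left(\sum_{x_i\in\Z_n}e_n(ta_ix_i)\right).$$
Each inner sum is a complete geometric sum over $\Z_n$ and equals $n$ if $n\mid ta_i$ and $0$ otherwise. Hence the product vanishes unless $n\mid ta_i$ for every $i$, in which case it equals $n^{k}$. Setting $T=\{\,t\in\Z_n : n\mid ta_i\ \text{for all}\ i\,\}$ to be the set of surviving $t$, we obtain $N=n^{k-1}\sum_{t\in T}e_n(-tb)$.

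The crux is to identify $T$. The simultaneous conditions $ta_i\equiv 0\ (\mathrm{mod}\ n)$ for $1\le i\le k$ hold if and only if $th\equiv 0\ (\mathrm{mod}\ n)$ for every element $h$ of the subgroup $\langle a_1,\ldots,a_k\rangle$ of $\Z_n$. Since this subgroup is generated by $l=\gcd(a_1,\ldots,a_k,n)$, the conditions collapse to the single congruence $tl\equiv 0\ (\mathrm{mod}\ n)$, i.e. $\frac{n}{l}\mid t$ (using $l\mid n$). Thus $T=\{0,\tfrac{n}{l},2\tfrac{n}{l},\ldots,(l-1)\tfrac{n}{l}\}$ has exactly $l$ elements. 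Substituting $t=j\frac{n}{l}$ gives
$$N=n^{k-1}\sum_{j=0}^{l-1}e_n\!\left(-j\tfrac{n}{l}b\right)=n^{k-1}\sum_{j=0}^{l-1}\exp\!\left(-\tfrac{2\pi i jb}{l}\right),$$
a geometric sum equal to $l$ when $l\mid b$ and to $0$ otherwise. Therefore $N=0$ when $l\nmid b$ (no solutions) and $N=l\,n^{k-1}$ when $l\mid b$, which is precisely the assertion.

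I expect the main obstacle to be the identification of $T$ in the third step: one must justify carefully that the simultaneous divisibility conditions $n\mid ta_i$ reduce to the single condition $\frac{n}{l}\mid t$. The clean way is the observation that $\langle a_1,\ldots,a_k\rangle=\langle\gcd(a_1,\ldots,a_k,n)\rangle$ in $\Z_n$, so that the set of $t$ killing all the $a_i$ coincides with the set of $t$ killing $l$; every remaining step is routine orthogonality and a geometric-series evaluation. (An alternative, more elementary route handles $k=1$ first, where $a_1x_1\equiv b\ (\mathrm{mod}\ n)$ has $\gcd(a_1,n)$ solutions precisely when $\gcd(a_1,n)\mid b$, and then inducts on $k$; but the Fourier computation above is shorter and aligns with the finite Fourier methods used throughout the paper.)
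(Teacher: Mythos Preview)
Your argument is correct: the orthogonality computation, the factorization of the inner sum, the identification of $T$ via $\langle a_1,\ldots,a_k\rangle=\langle l\rangle$ in $\Z_n$, and the final geometric sum all go through as written.

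However, there is nothing to compare against: the paper does not prove this theorem. It is quoted in the Introduction as a classical result of D.~N.~Lehmer \cite{lehmer1913certain} and serves only as historical background for the restricted-congruence problem treated later. So your proposal is not a reconstruction of the paper's proof but an independent (and perfectly valid) proof of a cited result. It is worth noting that your finite-Fourier approach is very much in the spirit of the methods the paper \emph{does} use for its main theorem, so stylistically it fits well; but strictly speaking the paper offers no proof of this statement at all.
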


On the above type of congruence equations, if we put some restrictions on the solution set, like $gcd(x_i,n)=t_i (1\leq i \leq k)$ where $t_i$ are given positive divisors of $n$, then it is called to be a restricted linear congruence. Many authors have attempted to solve these kind of restricted congruences with varying conditions. With $a_i=1$ and restrictions $(x_i,n)=1$, Rademacher \cite{radekacher1925aufgabe}  and Brauer \cite{brauer1926losung} independenty gave a formula for the number of solutions $N_n(k,b)$ of the congruence. Their formula was
\begin{equation}
 N_n(k,b)=\frac{\varphi(n)^k}{n}\prod_{p|n,p|b}\left(1-\frac{(-1)^{k-1}}{(p-1)^{k-1}}\right) \prod_{p|n,p\nmid b}\left(1-\frac{(-1)^{k}}{(p-1)^{k}}\right)
\end{equation}
where $\varphi$ is the Euler Totient function and $p$ are all the prime divisors of $n$. An equivalent formula involving the Ramanjans sums was proved by Nicol and Vandiver \cite{nicol1954sterneck} initially, and later by E. Cohen \cite{cohen1955class}. They proved that
\begin{equation}
 N_n(k,b) = \frac{1}{n}\sum\limits_{d|n}c_d(b)\left(c_n\left(\frac{n}{d}\right)\right)^k
\end{equation}
where $c_r(n)$ denote the usual Ramanujan sum.

The restricted congruence (\ref{gen_lin_cong}) and their solutions has found interesting applications in various fields including number theory, cryptography, combinatorics, computer science etc. The special case of the problem with $b=0$ and $a_i=1$ is related to the multivariate arithmetic function defined by Liskovets (see \cite{liskovets2010multivariate}) which has many combinatorial as well as topological applications. The problem has also found use in studying universal hashing (see Bibak \emph{et al.} \cite{bibak2015almost}) which has applications in computer science.

In \cite{bibak2016restricted} Bibak \emph{et al.} considered the  linear congruence (\ref{gen_lin_cong}) taking $a_i=1$ and the restrictions $(x_i,n)=t_i$ where $t_i$ are given positive divisors of $n$. They proved the following:
\begin{thm}
 Let $b,n\in\Z, n\geq 1$, and $d_1\ldots, d_{\tau(n)}$ be the positive divisors of $n$. For $1\leq j\leq \tau(n)$, define $\mathcal{C}_j=\{1\leq x \leq n|(x,n)=d_j\}$. The number of solutions of the linear congruence $x_1+\ldots +x_k\equiv b\,(\text{mod }n)$, with $g_j= |\{x_1,\ldots, x_k\}\cap \mathcal{C}_{j}|$, $1\leq j\leq \tau(n)$, is\footnote{The formula appearing in theorem 1.1 of Bibak \emph{et al.} \cite{bibak2016restricted} seems to have mistyped $d$ in the place of $\frac{n}{d}$ in the second Ramanujan sum.} 
\begin{equation}
 \frac{1}{n}\sum\limits_{d|n}c_{d}(b)\prod\limits_{j=1}^{\tau(n)}\left(c_{\frac{n}{d_j}}\left(\frac{n}{d}\right)\right)^{g_j}
\end{equation}
 
\end{thm}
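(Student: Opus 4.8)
The plan is to count the solutions by the finite Fourier (orthogonality) method that proves the classical case, and then to collapse the resulting character sum into the divisor sum by exploiting the multiplicative behaviour of the generalized gcd $(\,\cdot\,,\cdot\,)_s$ together with Cohen's divisor-sum expression for $c_{r,s}$. Throughout, the congruence is read modulo $n^s$ (which for $s=1$ is exactly the congruence modulo $n$ treated by Bibak \emph{et al.}), matching the range $1\le x_i\le n^s$ imposed by the classes $\mathcal{C}_{j,s}$; write $e(t)=e^{2\pi i t}$.

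First I would detect the congruence by the orthogonality of additive characters: the indicator of $x_1+\cdots+x_k\equiv b$ equals $\frac{1}{n^s}\sum_{a=0}^{n^s-1}e\!\left(a(x_1+\cdots+x_k-b)/n^s\right)$. Summing over all admissible tuples and interchanging the order of summation gives, with $N$ the number of solutions,
\[ N=\frac{1}{n^s}\sum_{a=0}^{n^s-1}e\!\left(-ab/n^s\right)\prod_{j=1}^{\tau(n)}\Bigl(\sum_{x\in\mathcal{C}_{j,s}}e\!\left(ax/n^s\right)\Bigr)^{g_j}, \]
the exponent $g_j$ appearing because exactly $g_j$ of the variables are constrained to $\mathcal{C}_{j,s}$. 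To evaluate the inner sum I would use that $(x,n^s)_s=d_j^s$ is equivalent to $x=d_j^s y$ with $1\le y\le (n/d_j)^s$ and $(y,(n/d_j)^s)_s=1$ — this rests on the multiplicative identity $(d_j^s y,n^s)_s=d_j^s\,(y,(n/d_j)^s)_s$ — so the substitution turns the inner sum into $\sum_{(y,(n/d_j)^s)_s=1}e\!\left(ay/(n/d_j)^s\right)$, which is precisely $c_{n/d_j,s}(a)$ by Cohen's definition.

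The decisive step is the re-summation over $a$. I would partition the indices $a\in\{0,\dots,n^s-1\}$ according to the value $(a,n^s)_s$, which is an $s$-th power dividing $n^s$ and hence equals $(n/d)^s$ for a unique $d\mid n$. Cohen's evaluation $c_{r,s}(m)=\sum_{e\mid r,\;e^s\mid m}e^s\mu(r/e)$ shows that $c_{n/d_j,s}(a)$ depends on $a$ only through the divisors $e\mid (n/d_j)$ with $e^s\mid a$; since $n/d_j\mid n$, once $(a,n^s)_s=(n/d)^s$ is fixed these are exactly the $e$ with $e\mid\gcd(n/d_j,n/d)$, whence $c_{n/d_j,s}(a)=c_{n/d_j,s}\!\left((n/d)^s\right)=c_{n/d_j,s}\!\left(n^s/d^s\right)$ is constant across the class. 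Finally, writing $a=(n/d)^s a'$ with $0\le a'<d^s$ and $(a',d^s)_s=1$, and using that $c_{d,s}$ is even, the character sum over each class collapses:
\[ \sum_{(a,n^s)_s=(n/d)^s}e\!\left(-ab/n^s\right)=\sum_{(a',d^s)_s=1}e\!\left(-a'b/d^s\right)=c_{d,s}(b). \]
Substituting the two evaluations into the expression for $N$ yields the claimed formula.

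The step I expect to be the main obstacle is not the Fourier bookkeeping but justifying the class-constancy of $\prod_{j}\bigl(c_{n/d_j,s}(a)\bigr)^{g_j}$, which hinges on the compatibility property that for every $r\mid n$ the quantity $(a,r^s)_s$ is determined by $(a,n^s)_s$ (concretely $(a,r^s)_s=\bigl((a,n^s)_s,r^s\bigr)_s$), and on verifying Cohen's divisor-sum formula for $c_{r,s}$ in the form used above. Once these facts about the generalized gcd are in place, the passage from the character sum to the divisor sum is formally identical to the classical argument of Nicol--Vandiver and Cohen, recovered here at $s=1$.
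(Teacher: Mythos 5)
Your proposal is correct and follows essentially the same route as the paper: detect the congruence with additive characters (the paper phrases this as a finite Fourier expansion of the solution-counting function $\hat f$), evaluate the inner sum as $c_{n/d_j,s}(a)$ via the substitution $x=d_j^s y$, partition $a$ by the class of $(a,n^s)_s$, and collapse each class using the evenness of $c_{r,s}$, ending with $c_{d,s}(b)$. The only cosmetic difference is that you justify class-constancy directly from Cohen's divisor-sum formula $c_{r,s}(m)=\sum_{e\mid r,\,e^s\mid m}\mu(r/e)e^s$, whereas the paper isolates it as a lemma ($c_{e,s}$ is $(n,s)$-even) proved from the gcd compatibility $\bigl((m,n^s)_s,e^s\bigr)_s=(m,e^s)_s$ — the very identity you invoke at the end — and the paper indexes classes by $(m,n^s)_s=d^s$ rather than $(n/d)^s$, fixing this by a final re-indexing $d\mapsto n/d$.
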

This result has been proved in some special cases by many authors, for example in \cite{cohen1955class, dixon1960finite, nicol1954sterneck, sander2013adding}. Bibak \emph{et al.} \cite{bibak2017restricted} themselves gave an alternate proof for the above result. This result was proved in \cite{bibak2016restricted} using finite Fourier transform of arithmetic functions and properties of Ramanujan sums. We generalize this result modifying the restrictions to $(x_i,n^s)_s=d_i^s$ where $d_s$ are positive divisors of $n$ and the modulus is $n^s$. We also hope that the result and proof will have some impact on attempts to solve non linear congruences as well in addition to demonstrating the diverse ways in which generalizations of Ramanujan sums can work.

\section{Notations and basic results}
For $a,b\in \Z$ with atleast one of them non zero, the generalized gcd of these numbers $(a,b)_s$ is defined to be the largest $l^s\in\N$ dividing $a$ and $b$ simultaneously. Therefore $(a,b)_1=(a,b)$, the usual gcd of two integers. $\tau(n)$ denotes the number of positive divisors of an integer $n$.

Let $c_r(n)$ denote the Ramanujan sum which is defined to be the sum of $n^{\text{th}}$ powers of primitive  $r^{\text{th}}$ roots of unity. That is, 
\begin{equation}\label{ram_sum}
 c_r(n)=\sum\limits_{j=1, (j,r)=1}^r e\left(\frac{jn}{r}\right)
\end{equation}
 For a positive integer $s$, E. Cohen \cite{cohen1949extension} generalized the Ramanujan sum defining $c_{r,s}$ as follows:
\begin{equation}
 c_{r,s}(n)=\sum\limits_{j=1,(j,r^s)_s=1}^{r^s}e\left(\frac{nj}{r^s}\right)
\end{equation}
Note that for $s=1$, this definition gives the usual Ramanujan sum defined in equation (\ref{ram_sum}). In the same paper, Cohen also gave the following formula:
\begin{equation}\label{mob_grs}
 c_{r,s}(n)=\sum\limits_{d|r, d^s|n}\mu\left(\frac{r}{d}\right)d^s
\end{equation}
where $\mu$ is the Moebius function.

For a positive integer $r$, an arithmetic function $f$ is said to be periodic with period $r$ (or $r$-periodic) if for every $m\in\Z$, $f(m+r)=f(m)$. By $e(x)$, we mean the complex exponential funcion $exp(2\pi i x)$ which has period 1.

We now have an easy, but very much useful lemma. 
\begin{lemma}
 As a function of $a$, $(a,b)_s$ is $b$-periodic.
\end{lemma}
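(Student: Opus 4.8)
The plan is to reduce the claimed $b$-periodicity to a single identity and then to establish that identity by comparing the sets over which the generalized gcd is realized as a maximum. By definition, $(a,b)_s$ is the largest element of the set
\[
S(a,b)=\{\, l^s : l\in\N,\ l^s\mid a,\ l^s\mid b \,\},
\]
which is nonempty since $1=1^s\in S(a,b)$, so the maximum is well defined. The function in question is $f(a)=(a,b)_s$, and to prove the lemma it suffices to show $f(a+b)=(a+b,b)_s=(a,b)_s=f(a)$ for every $a\in\Z$; applying this repeatedly yields $f(a+mb)=f(a)$ for all $m\in\Z$, which is exactly $b$-periodicity.

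The key step is the set equality $S(a,b)=S(a+b,b)$. The point is that both sets already impose the condition $l^s\mid b$, and under that condition divisibility behaves linearly: if $l^s\mid b$ and $l^s\mid a$, then $l^s\mid a+b$; conversely, if $l^s\mid b$ and $l^s\mid a+b$, then $l^s\mid (a+b)-b=a$. Hence, for any $l$ with $l^s\mid b$, we have $l^s\mid a$ if and only if $l^s\mid a+b$, so $S(a,b)$ and $S(a+b,b)$ contain precisely the same elements. Taking the maximum of each side gives $(a,b)_s=(a+b,b)_s$.

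I expect no genuine obstacle here; the argument is a routine consequence of the linearity of divisibility. The only points deserving a word of care are that the maximum defining $(a,b)_s$ exists (which follows from nonemptiness together with the boundedness $l^s\le|b|$ in the nontrivial case $b\neq 0$, the case $b=0$ making $b$-periodicity vacuous) and that the sign of $b$ is irrelevant, since $l^s\mid b$ is equivalent to $l^s\mid -b$. With these remarks, the set equality $S(a,b)=S(a+b,b)$, and therefore the lemma, follows at once.
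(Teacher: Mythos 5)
Your proof is correct. It establishes the same identity the paper does, $(a+b,b)_s=(a,b)_s$, and both arguments ultimately rest on the same one-line fact: when $l^s\mid b$, we have $l^s\mid a$ if and only if $l^s\mid a+b$. But your route through it is genuinely different and, in one respect, tighter. The paper argues by extremality: it sets $l^s=(a,b)_s$, observes $l^s$ divides both $a+b$ and $b$, and then writes $(a+b,b)_s=l^sl_1^s$ for some $l_1$, concluding $l_1=1$ since otherwise $(ll_1)^s$ would divide $a$ and contradict maximality. That step tacitly assumes $(a+b,b)_s$ is divisible by $l^s$ with the quotient itself an $s$-th power; this is true, but it requires a small valuation argument (from $l^s\mid x$ one gets $v_p(l)\le\lfloor v_p(x)/s\rfloor$ for each prime $p$, so every common $s$-th power divisor divides the largest one with $s$-th power quotient), and the paper does not supply it. Your formulation via the set identity $S(a,b)=S(a+b,b)$ followed by taking maxima sidesteps this entirely: equal sets have equal maxima, and no structural claim about the generalized gcd is needed. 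You also attend to two points the paper leaves implicit, namely the degenerate case $b=0$ (where periodicity is vacuous) and the passage from the single shift $f(a+b)=f(a)$ to $f(a+mb)=f(a)$ for all $m\in\Z$, which is what the paper's definition of $b$-periodicity literally requires together with its use for negative shifts. So the proposal is not merely an alternative; it is a cleaner and fully rigorous version of the paper's argument.
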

\begin{proof}
 Note that $(a,b)_s=l^s$ is the largest $s^{\text{th}}$ power that divides $a$ and $b$ simultaneously. So $a=l^s a_1$ and $b=l^s b_1$ with $a_1$ and $b_1$ sharing no common $s^{\text{th}}$ power. Now $l^s | a+b$ and $l^s | b$. If $(a+b,b)_s=l^sl_1^s$ for some $l_1$, then $l_1^s|a+b$ and $l_1^s|b$ so that $l_1^s|a$ as well. Therfore $(a,b)_s=l^sl_1^s$ and so $l_1=1$. So $(a+b, b)_s=l^s$.
\end{proof}

Let $r,s$ be  positive integers. A function $f$ that satisfies $f(m)=f((m,r^s)_s)$ is called as an $(r,s)$-even function. This concept was introduced by McCarthy in \cite{mccarthy1960generation} and many of its properties were studied there. The above lemma says that an $(r,s)$-even function is $r^s$-periodic. 

The following appeared as Lemma (2) in \cite{cohen1950extension}:
\begin{lemma}
 If $(n,r^s)_s=l^s$, then $c_{r,s}(n)= c_{r,s}(l^s)$.
\end{lemma}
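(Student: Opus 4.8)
The plan is to avoid the exponential-sum definition entirely and instead exploit Cohen's divisor-sum expression (\ref{mob_grs}), which writes $c_{r,s}(n)=\sum_{d\mid r,\,d^s\mid n}\mu(r/d)\,d^s$. Once the sum is in this form, the value of $c_{r,s}(n)$ depends on $n$ only through which divisors $d$ of $r$ satisfy $d^s\mid n$. Hence it suffices to show that, for every $d$ with $d\mid r$, the condition $d^s\mid n$ is equivalent to the condition $d^s\mid l^s$, where $l^s=(n,r^s)_s$. If this equivalence holds, the index sets of $c_{r,s}(n)$ and $c_{r,s}(l^s)$ coincide term by term and the lemma follows at once.

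To prove the equivalence, first note that $l^s\mid n$ by the definition of the generalized gcd, so $d^s\mid l^s$ trivially yields $d^s\mid n$. For the reverse direction, suppose $d\mid r$ and $d^s\mid n$; then $d^s\mid r^s$ as well, so $d^s$ is an $s$-th power dividing both $n$ and $r^s$. The heart of the matter is that any such common $s$-th power must divide the \emph{largest} one, namely $l^s$. I would establish this by comparing $p$-adic valuations: writing $\alpha=v_p(n)$ and $\beta=v_p(r^s)$, the largest common $s$-th power divisor has $v_p(l^s)=s\lfloor\min(\alpha,\beta)/s\rfloor$, while $d^s\mid n$ and $d^s\mid r^s$ force $s\,v_p(d)\le\min(\alpha,\beta)$ and hence $v_p(d)\le\lfloor\min(\alpha,\beta)/s\rfloor=v_p(l)$ for every prime $p$. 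Thus $d\mid l$, so $d^s\mid l^s$, which completes the equivalence.

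With the equivalence in hand, applying (\ref{mob_grs}) twice gives
$$c_{r,s}(n)=\sum_{d\mid r,\,d^s\mid n}\mu\!\left(\frac{r}{d}\right)d^s=\sum_{d\mid r,\,d^s\mid l^s}\mu\!\left(\frac{r}{d}\right)d^s=c_{r,s}(l^s),$$
as required. The main obstacle is precisely the maximality argument in the middle step: one must confirm that $l^s$ is not merely \emph{a} common $s$-th power divisor of $n$ and $r^s$ but is divisible by \emph{every} such common divisor. This lattice property of $(\cdot,\cdot)_s$ is the only place where the definition of the generalized gcd is genuinely used. An alternative to the valuation computation would be to observe that if $d_1^s$ and $d_2^s$ both divide $n$ and $r^s$, then $\mathrm{lcm}(d_1,d_2)^s=\mathrm{lcm}(d_1^s,d_2^s)$ is again a common $s$-th power divisor, so the family of such divisors is closed under $\mathrm{lcm}$ and therefore has a unique maximal element $l^s$ that all of them divide; but the prime-by-prime argument is the most transparent route.
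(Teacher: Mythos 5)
Your proof is correct, but it cannot be compared to a proof in the paper because the paper gives none: the lemma is imported verbatim as Lemma (2) of Cohen \cite{cohen1950extension}, so your argument supplies a self-contained derivation where the text has only a citation. Your route is the natural one given what the paper does state: formula (\ref{mob_grs}) shows that $c_{r,s}(n)$ depends on $n$ only through the index set $\{d : d\mid r,\ d^s\mid n\}$, and you correctly isolate the crux, namely the equivalence $d^s\mid n \iff d^s\mid l^s$ for $d\mid r$, where $l^s=(n,r^s)_s$. The only step that genuinely needs care is the maximality property of the generalized gcd --- that the \emph{numerically} largest common $s$-th power divisor of $n$ and $r^s$ is divisible by \emph{every} common $s$-th power divisor, since the paper's definition only asserts maximality in magnitude --- and your valuation computation $v_p(l)=\lfloor\min(v_p(n),v_p(r^s))/s\rfloor$ settles this cleanly; the lcm-closure observation you sketch is an equally valid substitute. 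Two minor remarks: the valuation argument tacitly assumes $n>0$ (for $n<0$ replace $n$ by $|n|$, and for $n=0$ every $d^s$ divides $n$ while $l^s=r^s$, so the index sets still agree), and it is worth noting that the statement you have proved is precisely the assertion that $c_{r,s}$ is an $(r,s)$-even function in McCarthy's sense, which is exactly the form in which the paper invokes this lemma as fact (1) in the proof of its Lemma 2.3; so your proof plugs the one genuinely uncited gap in the paper's chain of lemmas.
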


The above two lemmas combined together tell that $c_{r,s}(n)$ is $r^s$-periodic. It also follows that $c_{r,s}(-n)=c_{r,s}(n)$.

We now have one more lemma. Since we could not find a proof for this anywhere, we prove it using some elementary arguments.
\begin{lemma}
 Let $e|n$. Then $c_{e,s}$ is $(n,s)$-even. That is, $ c_{e,s}(m) = c_{e,s}\left((m,n^s)_s\right)$.
 \begin{proof}
  We use two facts; 
  \begin{enumerate}
   \item $c_{e,s}$ is $(e,s)$-even.
   \item $\left((m,n^s)_s, e^s\right)_s = l^s$ if and only if $l^s$ is the largest $s^{\text{th}}$ power dividing $(m,n^s)_s$ and $e^s$, that is if and only if $l^s$ is the largest $s^{\text{th}}$ power dividing $m,n^s$ and $e^s$. Therefore $l^s$ is the largest $s^{\text{th}}$ power dividing $m$ and $e^s$, and so $(m,e^s)_s = l^s$. The conclusion is that $((m,n^s)_s, e^s)_s = (m,e^s)_s$.
  \end{enumerate}
  
  Combining these two facts, we get
  $$c_{e,s}\left((m,n^s)_s\right) = c_{e,s}(\left((m,n^s)_s,e^s)_s\right) = c_{e,s}((m,e^s)_s) = c_{e,s}(m)$$

 \end{proof}

\end{lemma}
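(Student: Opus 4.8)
The plan is to split the identity $c_{e,s}(m)=c_{e,s}\!\left((m,n^s)_s\right)$ into two independent pieces: an evenness property of $c_{e,s}$ at its ``natural'' modulus $e^s$, and a transitivity identity for the generalized gcd that lets me trade $n^s$ for $e^s$. The first piece comes for free from Cohen's lemma above (if $(m,e^s)_s=l^s$ then $c_{e,s}(m)=c_{e,s}(l^s)$): specializing that lemma with $r=e$ gives at once that $c_{e,s}$ is $(e,s)$-even, i.e. $c_{e,s}(m)=c_{e,s}\!\left((m,e^s)_s\right)$ for every integer $m$. Since $(m,n^s)_s$ is itself an integer, the same statement applied to it yields $c_{e,s}\!\left((m,n^s)_s\right)=c_{e,s}\!\left(((m,n^s)_s,\,e^s)_s\right)$.

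It then remains to prove the transitivity identity
$$\left((m,n^s)_s,\,e^s\right)_s = (m,e^s)_s \qquad (e\mid n).$$
I would establish this through the ``universal'' divisibility property of the generalized gcd: an $s$-th power $l^s$ divides $(a,b)_s$ if and only if $l^s$ divides both $a$ and $b$. This is checked prime-by-prime, since writing $a=\prod p^{a_p}$ and $b=\prod p^{b_p}$ gives $(a,b)_s=\prod p^{s\min(\lfloor a_p/s\rfloor,\,\lfloor b_p/s\rfloor)}$, so $l^s\mid(a,b)_s$ holds exactly when the valuation bound holds for both $a$ and $b$. Granting this, $l^s$ divides both $(m,n^s)_s$ and $e^s$ iff $l^s\mid m$, $l^s\mid n^s$, and $l^s\mid e^s$. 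Here the hypothesis $e\mid n$ enters decisively: it forces $e^s\mid n^s$, so the condition $l^s\mid n^s$ is subsumed by $l^s\mid e^s$ and may be dropped. Hence $l^s$ divides both $(m,n^s)_s$ and $e^s$ iff $l^s\mid m$ and $l^s\mid e^s$, which by the universal property means $l^s\mid(m,e^s)_s$; taking the largest such $l^s$ yields the identity.

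Chaining the two pieces finishes the argument:
$$c_{e,s}\!\left((m,n^s)_s\right)=c_{e,s}\!\left(((m,n^s)_s,e^s)_s\right)=c_{e,s}\!\left((m,e^s)_s\right)=c_{e,s}(m).$$
I expect the only genuine obstacle to be the transitivity identity, and specifically the universal divisibility property on which it rests. Unlike the ordinary gcd, where ``every common divisor divides the gcd'' is automatic, for $(a,b)_s$ one must confirm that \emph{every} common $s$-th power, not merely the maximal one, divides the generalized gcd; the valuation computation above is the clean way to secure this. Once it is in hand, the role of the assumption $e\mid n$ is transparent, and no estimates or case analysis are required.
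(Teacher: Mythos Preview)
Your proposal is correct and follows essentially the same route as the paper: both proofs reduce to the $(e,s)$-evenness of $c_{e,s}$ together with the identity $((m,n^s)_s,e^s)_s=(m,e^s)_s$, and then chain these two facts exactly as you do. Your justification of the gcd identity via the universal divisibility property and $p$-adic valuations is slightly more explicit than the paper's direct appeal to ``largest $s$-th power dividing all three,'' but the underlying argument is the same.
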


For an $r$-periodic arithmetic function $f(n)$, its finite Fourier transform is defined to be the function 
\begin{equation}
 \hat{f}(b) = \frac{1}{r}\sum\limits_{n=1}^r f(n) e\left(\frac{-bn}{r}\right)
\end{equation}
A Fourier representation of $f$ is given by 
\begin{equation}
 f(n)= \sum\limits_{b=1}^r \hat{f}(b) e\left(\frac{bn}{r}\right)
\end{equation}
See, for example, \cite{montgomery2006multiplicative} for a detailed study on Finite Fourier transforms.

We are now ready to state and prove our main result.
\section{The Main Theorem}
\begin{thm}
 Let $b,n\in\Z, n\geq 1$, and $d_1\ldots, d_{\tau(n)}$ be the positive divisors of $n$. For $1\leq l\leq \tau(n)$, define $\mathcal{C}_{j,s}=\{1\leq x \leq n^s|(x,n^s)_s=d^s_j\}$. The number of solutions of the linear congruence 
 \begin{equation}\label{res_lin_cong}
x_1+\ldots +x_k\equiv b\,(\text{mod }n)
 \end{equation}
with restrictions $(x_i,n^s)_s=t_i^s$, where $t_i|n$  are given for $i=1,\ldots,k$ is
\begin{equation}
\frac{1}{n^s}\sum\limits_{d|n}c_{d,s}(b)\prod\limits_{j=1}^{\tau(n)}\left(c_{\frac{n}{d_j},s}(\frac{n^s}{d^s})\right)^{g_j}
\end{equation}
 where $g_j= |\{x_1,\ldots, x_k\}\cap \mathcal{C}_{j,s}|$, $1\leq j\leq \tau(n)$.
\end{thm}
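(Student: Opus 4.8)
The plan is to detect the congruence with additive characters and then to recognize every character sum that appears as one of Cohen's generalized Ramanujan sums. Because each $x_i$ runs over the residue system $1\le x_i\le n^s$ and the restriction $(x_i,n^s)_s=t_i^s$ has period $n^s$, the natural characters are $x\mapsto e(ax/n^s)$ with $1\le a\le n^s$, and the congruence is detected modulo $n^s$ (the common period of the data) by the orthogonality relation
\[
\frac{1}{n^s}\sum_{a=1}^{n^s} e\!\left(\frac{a(x_1+\ldots+x_k-b)}{n^s}\right)=
\begin{cases}1 & \text{if } x_1+\ldots+x_k\equiv b\ (\mathrm{mod}\ n^s),\\ 0 & \text{otherwise.}\end{cases}
\]
Substituting this into the count $N$ of admissible tuples and interchanging the order of summation factors $N$ as
\[
N=\frac{1}{n^s}\sum_{a=1}^{n^s} e\!\left(\frac{-ab}{n^s}\right)\prod_{i=1}^{k}\ \sum_{\substack{1\le x\le n^s\\ (x,n^s)_s=t_i^s}} e\!\left(\frac{ax}{n^s}\right).
\]

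The heart of the argument is to evaluate the inner sum over a single restricted class as one generalized Ramanujan sum. I would put $x=t_i^s y$; a prime-by-prime computation gives the multiplicativity identity $(t_i^s y,n^s)_s=t_i^s\,(y,(n/t_i)^s)_s$, so that $(x,n^s)_s=t_i^s$ holds exactly when $(y,(n/t_i)^s)_s=1$, with $y$ running over $1\le y\le (n/t_i)^s$. Since $e(ax/n^s)=e(ay/(n/t_i)^s)$ under this substitution, comparing with Cohen's definition yields
\[
\sum_{\substack{1\le x\le n^s\\ (x,n^s)_s=t_i^s}} e\!\left(\frac{ax}{n^s}\right)=c_{n/t_i,\,s}(a),
\]
the $s$-analogue of the classical identity $\sum_{(x,n)=t}e(ax/n)=c_{n/t}(a)$ that underlies the Nicol--Vandiver--Cohen and Bibak \emph{et al.} formulas.

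Collecting the $k$ factors by the class of each $x_i$ replaces the product by $\prod_{j=1}^{\tau(n)}\bigl(c_{n/d_j,\,s}(a)\bigr)^{g_j}$. Now, since $n/d_j\mid n$ for every $j$, the lemma stating that $c_{e,s}$ is $(n,s)$-even whenever $e\mid n$ gives $c_{n/d_j,s}(a)=c_{n/d_j,s}\!\left((a,n^s)_s\right)$, so the whole product depends on $a$ only through $(a,n^s)_s$. The values assumed by $(a,n^s)_s$ are precisely the $s$-th powers $e^s$ with $e\mid n$, and for fixed such $e$ the substitution $a=e^s a'$ parametrizes $\{a:(a,n^s)_s=e^s\}$ by $\{a':(a',(n/e)^s)_s=1\}$; the same identity used for the inner sum (together with $c_{r,s}(-b)=c_{r,s}(b)$) then gives $\sum_{(a,n^s)_s=e^s} e(-ab/n^s)=c_{n/e,\,s}(b)$. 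Partitioning the frequencies accordingly produces
\[
N=\frac{1}{n^s}\sum_{e\mid n} c_{n/e,\,s}(b)\prod_{j=1}^{\tau(n)}\bigl(c_{n/d_j,\,s}(e^s)\bigr)^{g_j},
\]
and reindexing by $d=n/e$, so that $e^s=n^s/d^s$, gives the asserted formula.

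The step I expect to demand the most care is the inner character-sum identity, and specifically the verification that $x\mapsto y=x/t_i^s$ is a bijection between $\{1\le x\le n^s:(x,n^s)_s=t_i^s\}$ and $\{1\le y\le (n/t_i)^s:(y,(n/t_i)^s)_s=1\}$ that respects the generalized gcd. At $s=1$ this reduces to familiar coprimality bookkeeping, but for general $s$ it hinges on the valuation identity $(t^s y,n^s)_s=t^s(y,(n/t)^s)_s$, which I would prove prime by prime using $v_p((a,b)_s)=s\min(\lfloor v_p(a)/s\rfloor,\lfloor v_p(b)/s\rfloor)$. A secondary point to nail down is that $(a,n^s)_s$ ranges exactly over $\{e^s:e\mid n\}$, so that the outer partition is indexed by the divisors of $n$; marrying this to the $(n,s)$-evenness of the $c_{n/d_j,s}$ is exactly what lets the two Ramanujan-sum evaluations---one in the variables $x_i$, one in the frequency $a$---interlock to give the closed form.
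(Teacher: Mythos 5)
Your proposal is correct and takes essentially the same approach as the paper: detecting the congruence modulo $n^s$ by character orthogonality is just the paper's finite Fourier inversion step in a different guise, and the inner-sum evaluation $\sum_{(x,n^s)_s=t^s}e(ax/n^s)=c_{n/t,\,s}(a)$, the $(n,s)$-evenness lemma, the partition of the frequencies by $(a,n^s)_s=e^s$, and the final reindexing $d=n/e$ all match the paper's proof step for step. Incidentally, your prime-by-prime verification of $(t^sy,n^s)_s=t^s\,(y,(n/t)^s)_s$ makes explicit a substitution the paper performs without justification (its display even miswrites the condition $(y,(n/d_j)^s)_s=1$ as $(y,n^s)_s=1$), and your reading of the modulus as $n^s$ agrees with the paper's own proof and worked example, even though the theorem statement literally says modulo $n$.
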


\begin{proof}
 As we have already mentioned, the proof uses the basic properties of finite Fourier transforms of $r^s$-periodic functions, the properties of generalized Ramanujan sums, and some combinatorial arguments. We follow the same approach used by Bibak \emph{et al.} in \cite{bibak2016restricted}. 
 
 Let $\hat{f}(b)$ denote the number of solutions of the linear congruence (\ref{res_lin_cong}). Therefore $\hat{f}(b)$ is the number of possible ways of writing $b$ as a sum modulo $n^s$ using $g_j$ elements in $\mathcal{C}_{j,s}$ where $j$ varies from 1 to $\tau(n)$. Note that the if $b$ is replaced with $b+n^s$ in this equation, it remains the same. So $\hat{f}(b) = \hat{f}(b+n^s)$ and therefore $\hat{f}$ is $n^s$-periodic. Let us consider the following product of exponential sums:
 
 $$\prod\limits_{j=1}^{\tau(n)}\left(\sum\limits_{x\in\mathcal{C}_{j,s}}e\left(\frac{mx}{n^s}\right)\right)^{g_j}$$
 To understand this product of sums better, put $\alpha = e(m)$. Then the product becomes $$\prod\limits_{j=1}^{\tau(n)}\left(\sum\limits_{x\in\mathcal{C}_{j,s}}\alpha^{x/n^s}\right)^{g_j}$$
 On expanding this product of sums, we get terms like $\alpha^{1/n^s}, \alpha^{2/n^s}, \ldots, \alpha^{(n^s-1)/n^s}$. Some of these powers may not even exist depending on whether sum of terms in $\mathcal{C}_{j,s}$ can be equal to that power or not. For example, $\alpha^{5/n^s}$ will not exist if the elements in various $\mathcal{C}_{j,s}$ cannot add up together to give $5\text{ modulo } n^s$. Now how many times each $\alpha^{b/n^s}$ will exist? Those many times equal to the number of possible solutions of the linear congruence with $g_j$ entries from $\mathcal{C}_{j,s}$. But this is precisely our $\hat{f}(b)$.
 
  So we get \begin{equation*}
  \sum\limits_{b=1}^{n^s} \hat{f}(b) e\left(\frac{bm}{n^s}\right) = \prod\limits_{j=1}^{\tau(n)}\left(\sum\limits_{x\in\mathcal{C}_{j,s}}e\left(\frac{mx}{n^s}\right)\right)^{g_j}
           \end{equation*}

 We now check the inner sum in this product:
 \begin{align*}
  \sum\limits_{x\in\mathcal{C}_{j,s}}e\left(\frac{mx}{n^s}\right) &= \sum\limits_{1\leq x \leq n^s, (x,n^s)_s = d_j^s}e\left(\frac{mx}{n^s}\right)\\
  &= \sum\limits_{1\leq y \leq \frac{n^s}{d_j^s}, (y,n^s)_s = 1}e\left(\frac{my}{n^s/d_j^s}\right)\\
  &= c_{\frac{n}{d_j},s}(m)
 \end{align*}
 
which gives \begin{equation*}
  \sum\limits_{b=1}^{n^s} \hat{f}(b) e\left(\frac{bm}{n^s}\right) = \prod\limits_{j=1}^{\tau(n)}\left(c_{\frac{n}{d_j},s}(m)\right)^{g_j}
           \end{equation*}

Use the fact that $\hat{f}(b)$ is $n^s$-periodic. By the finite Fourier transform theory, we get 
\begin{eqnarray*}
 \hat{f}(b) &=& \frac{1}{n^s}\sum\limits_{m=1}^{n^s}  \left[\prod\limits_{j=1}^{\tau(n)}\left(c_{\frac{n}{d_j},s}(m)\right)^{g_j}\right] e\left(\frac{-bm}{n^s}\right)\\
 & &\qquad\text{now collect the terms with same generalized gcd}\\
 &=& \frac{1}{n^s}\sum\limits_{d|n}\sum\limits_{1\leq m \leq n^s, (m,n^s)s=d^s}  \left[\prod\limits_{j=1}^{\tau(n)}\left(c_{\frac{n}{d_j},s}(m)\right)^{g_j}\right] e\left(\frac{-bm}{n^s}\right)\\
  &=& \frac{1}{n^s}\sum\limits_{d|n}\sum\limits_{1\leq m' \leq \frac{n^s}{d^s}, (m',\frac{n^s}{d^s})s=1}  \left[\prod\limits_{j=1}^{\tau(n)}\left(c_{\frac{n}{d_j},s}(m'd^s)\right)^{g_j}\right] e\left(\frac{-bm'd^s}{n^s}\right)\\
   &=& \frac{1}{n^s}\sum\limits_{d|n}\sum\limits_{1\leq m' \leq \frac{n^s}{d^s}, (m',\frac{n^s}{d^s})s=1}  e\left(\frac{-bm'}{n^s/d^s}\right)  \left[\prod\limits_{j=1}^{\tau(n)}\left(c_{\frac{n}{d_j},s}((m'd^s, n^s)_s)\right)^{g_j}\right]\\
  &=& \frac{1}{n^s}\sum\limits_{d|n}\sum\limits_{1\leq m' \leq \frac{n^s}{d^s}, (m',\frac{n^s}{d^s})s=1}  e\left(\frac{-bm'}{n^s/d^s}\right)  \left[\prod\limits_{j=1}^{\tau(n)}\left(c_{\frac{n}{d_j},s}(d^s)\right)^{g_j}\right]\\
  &=& \frac{1}{n^s}\sum\limits_{d|n}c_{\frac{n}{d},s}(b) \left[\prod\limits_{j=1}^{\tau(n)}\left(c_{\frac{n}{d_j},s}(d^s)\right)^{g_j}\right]\\
  &=& \frac{1}{n^s}\sum\limits_{d|n}c_{d,s}(b) \left[\prod\limits_{j=1}^{\tau(n)}\left(c_{\frac{n}{d_j},s}(\frac{n^s}{d^s})\right)^{g_j}\right]
\end{eqnarray*}
\end{proof}

We will give a small example to demonstrate the result: Consider the linear congruence $x_1+x_2 \equiv 5\, (\text{mod } 16)$. Here $n^s=4^2, b=5, k=2$.
$\{d_j^2\} = \{1,4,16\}$ and so \\
$\mathcal{C}_{1,2} = \{1,2,3,5,6,7,9,10,11,13,14,15\}$\\
$\mathcal{C}_{2,2} = \{4,8,12\}$\\
$\mathcal{C}_{3,2} = \{16\}$

Suppose that we want to find solutions with the restrictions $(x_1,16)_2=1$ and $(x_2,16)_2 = 4$. In this case $g_1 = 1, g_2=1, g_3=0$. By simple observation, we get the number of solutions to be 3 which are $\langle 1,4\rangle, \langle9,12\rangle, \langle13,8\rangle$. Now according to our formula, the computation is the follwing:
\begin{eqnarray*}
 \sum\limits_{d|4}c_{d,2}(5)\left[\prod\limits_{j=1}^3 c_{\frac{4}{d_j},2}\left(\frac{16}{d^2}\right)^{g_j}\right] = & &c_{1,2}(5)\times \left[\prod\limits_{j=1}^3 c_{\frac{4}{d_j},2}\left(\frac{16}{1^2}\right)^{1}\right]\\
 & &+ c_{2,2}(5)\times \left[\prod\limits_{j=1}^3 c_{\frac{4}{d_j},2}\left(\frac{16}{2^2}\right)^{1}\right]\\
 & &+ c_{4,2}(5)\times \left[\prod\limits_{j=1}^3 c_{\frac{4}{d_j},2}\left(\frac{16}{4^2}\right)^{1}\right] \\
 =& & c_{1,2}(5)\times \left[ c_{4,2}\left(16\right)\times c_{2,2}\left(16\right) \times c_{1,2}\left(16\right)\right]\\
 &+& c_{2,2}(5)\times \left[ c_{4,2}\left(4\right)\times c_{2,2}\left(4\right) \times c_{1,2}\left(4\right)\right]\\
 &+& c_{4,2}(5)\times \left[ c_{4,2}\left(1\right)\times c_{2,2}\left(1\right) \times c_{1,2}\left(1\right)\right]\\
 =& & 1\times 12.3.1+(-1)\times (-4).3.1+0\times 0.(-1).1\\
 =& & 48
\end{eqnarray*}
which on division by 16 gives 3 as the number of solutions. We have used identity (\ref{mob_grs}) to evaluate $c_{r,s}$ at various values.

\bibliography{nt} 
\bibliographystyle{plain}

\end{document}